\newtheorem{theorem}{Theorem}[section]
\newtheorem{lemma}[theorem]{Lemma}
\newtheorem{proof}{\textmd{\textit{Proof.}}}
\newtheorem{remark}[theorem]{Remark}
\newcommand{\qedd}{\hfill \Box}
\newcommand{\Sph}{\ensuremath{\mathbb{S}}}
\title{
Berwald spaces of bounded curvature are Riemannian
\footnote{
Mathematics Subject Classification (2010)\,:\,53C60, 53C22.}
\footnote{
Keywords: Finsler manifolds, Berwald manifolds, holonomy group, Maximal diameter sphere theorem.}
}
\author{Nathaphon BOONNAM, Rattanasak HAMA, Sorin V. SABAU}
\date{}
\begin{document}


\maketitle
\begin{abstract}
	We prove that Berwald spaces whose flag curvature is nowhere vanishing are in fact Riemannian spaces. This means that any Berwald space with flag curvature bounded below by a positive number must be also Riemannian. This rigidity result shows the importance of non-Riemannian examples when imposing flag curvature bounds on Finsler spaces. 
\end{abstract}

\section{Introduction}

\quad Finsler manifolds are natural generalizations of the Riemannian ones in the sense that the metric depends not only on the point, but also on the direction as well. Even though classical Finsler geometry was mainly concerned with the local aspects of the theory, recently a lot of efforts was made to obtain global results in the geometry of Finsler manifolds (\cite{BCS}, \cite{KOT}, \cite{Oh}, \cite{Sa}, \cite{ST} and many others). 

In this quest of extending classical global results from Riemannian manifolds to Finslerian ones, the complicated form of the second variation formula of the arc length (see \cite{BCS}, \cite{S}), the non-reversibility of the geodesics or the non-symmetry of the distance function can be extremely inconvenient. 

A special class of Finsler manifolds are the so-called Berwald manifolds (or Berwald spaces) characterized by the property that its Chern connection coefficients depend only on the point and not on direction (see for instance \cite{BCS}). 
There are many reasons to be interested in such special Finsler  structures. They are natural generalizations of Riemannian and locally Minkowski structures, and it happens that the Chern connection of a Berwald manifold is in fact the Levi-Civita connection of an Riemannian metric on $M$. Moreover, for a Berwald structure, all its tangent spaces are linearly isometric to a common Minkowski space (see Section 2 for details).   

Beside these elementary properties, Berwald manifolds have remarkable metric properties such that the vanishing of the tangent-curvature (see \cite{S}) and a very friendly form of the second variation arc-length formula (in fact similar to the Riemannian case) that allow extending different well-known comparison theorems from the Riemannian realm to the Finslerian setting. 

Our initial motivation in writing this paper was to extend the famous Maximal Diameter Sphere Theorem 
(see \cite{CE} or the original paper \cite{T}) 
 to Finsler manifolds, and the first natural attempt is to do it by  considering Berwald manifolds due to the reasons explained above. Obviously there are many other beautiful results in global differential geometry that need to be extended to the Finsler setting.

However, we found out that there is a big price to pay for trying to  extend formally such Riemmanian results, to the case of Berwald manifolds, by using upper or lower bounds on the flag curvature. In fact, these Berwald manifolds turn out to be Riemannian, so the extension of the results, without being wrong, are in fact trivial.


Here is our Main Theorem.

\begin{theorem}\label{thm: main_thm}
A complete Berwald manifold with nowhere vanishing flag curvature must be Riemannian. 
\end{theorem}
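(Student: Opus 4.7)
The plan is to reduce to the Riemannian holonomy of the canonically associated metric $\wt g$, and then to invoke the Berger classification together with the rigidity of the reducible and higher-rank symmetric cases.

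\medskip

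\textbf{Step 1 (Setup via holonomy).} First, for a Berwald manifold $(M, F)$ one has that the Chern connection coincides with the Levi--Civita connection of a Riemannian metric $\wt g$ on $M$, and that parallel transport along any curve is a linear isometry between the Minkowski tangent spaces. Consequently, the Riemannian holonomy group $\mathrm{Hol}_p \subset O(T_pM, \wt g_p)$ preserves the Finsler indicatrix $\Sigma_p := \{v \in T_pM : F(v) = 1\}$. Moreover, the Chern curvature tensor equals the Riemann curvature tensor $R^{\wt g}$ of $\wt g$ viewed as a $(1,3)$-tensor; hence whenever $R^{\wt g}(V,y)y = 0$ for some flag $(y, V)$, the flag curvature $K^F(y, V)$ automatically vanishes.

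\medskip

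\textbf{Step 2 (Ruling out reducibility).} Pass to the universal cover $(\wt M, \wt F)$, still a complete Berwald space with nowhere vanishing flag curvature. Apply the de Rham decomposition to $(\wt M, \wt g)$: any non-trivial splitting $\wt M = \R^k \times \wt M_1 \times \cdots \times \wt M_\ell$ with $k \geq 1$ or $\ell \geq 2$ yields $2$-planes --- either inside the Euclidean factor or ``mixed'' between two distinct irreducible factors --- on which the Riemann curvature $R^{\wt g}$ vanishes identically. By Step 1, the flag curvatures of the corresponding flags must vanish, contradicting the hypothesis. Hence $(\wt M, \wt g)$ is irreducible and has no Euclidean factor.

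\medskip

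\textbf{Step 3 (Berger's theorem and indicatrix rigidity).} Berger's holonomy theorem then leaves two alternatives: either $(\wt M, \wt g)$ is locally symmetric, or $\mathrm{Hol}_p$ coincides with one of $\mathrm{SO}(n)$, $\mathrm{U}(n/2)$, $\mathrm{SU}(n/2)$, $\mathrm{Sp}(n/4)$, $\mathrm{Sp}(n/4)\cdot \mathrm{Sp}(1)$, $G_2$, $\mathrm{Spin}(7)$. Each group in the latter list acts transitively on the $\wt g$-unit sphere $S_p \subset T_pM$. Writing the indicatrix as a radial graph $\Sigma_p = \{r(v)\,v : v \in S_p\}$ for a smooth positive function $r$, invariance of $\Sigma_p$ under the transitive, linear-isometric action of $\mathrm{Hol}_p$ forces $r$ to be constant. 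Therefore $\Sigma_p$ is a round $\wt g$-sphere, so $F_p$ is a constant multiple of $\sqrt{\wt g_p(\cdot,\cdot)}$; parallel transport propagates this identity globally and shows that $F$ is Riemannian. In the symmetric case, rank-one irreducible symmetric spaces also have holonomy transitive on the sphere, so the same argument closes them; for rank $\geq 2$, the maximal abelian subspace $\mathfrak{a} \subset \mathfrak{p}$ yields $2$-planes of zero sectional curvature of $\wt g$, hence zero flag curvature, again contradicting the hypothesis.

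\medskip

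The main anticipated obstacle is Step 2: one must verify carefully that the Berwald structure is preserved under passage to the universal cover and that the tensorial identity $R^{\mathrm{Chern}} = R^{\wt g}$ is used precisely enough to force vanishing flag curvature on mixed flags. Both are standard but essential features of the Berwald setting, and without them the reduction to Berger's theorem is not available.
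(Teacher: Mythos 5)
Your proof is correct, and it reaches the conclusion by a genuinely more self-contained route than the paper. The paper's proof is organized around two cited black boxes: Szabo's structure theorem for Berwald spaces (four cases: Riemannian, locally Minkowski, irreducible locally symmetric of rank $\geq 2$, or a local product of these) and the Deng--Hou theorem that a complete locally symmetric Berwald space with nowhere vanishing flag curvature is Riemannian; the only case the paper argues by hand is the reducible one, via the vanishing of mixed sectional curvatures of the product Binet--Legendre metric. You instead re-derive both ingredients: your Step 2 (de Rham decomposition of the universal cover plus the identity $R^{\mathrm{Chern}}=R^{\wt g}$) is the paper's case (4); your Step 3 recovers the content of Szabo's theorem in the irreducible non-symmetric case (Berger's list, transitivity of each non-symmetric holonomy group on the unit sphere, hence roundness of the invariant indicatrix) and the content of Deng--Hou/Matveev in the symmetric case (transitive isotropy in rank one, flat $2$-planes coming from a maximal abelian subspace $\mathfrak a\subset\mathfrak p$ in rank $\geq 2$). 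What your version buys is transparency and rigor at one point where the paper is terse: the paper's case (4) only computes that mixed planes have zero \emph{sectional} curvature for $\wt g$ and then asserts the \emph{flag} curvature vanishes; your Step 1 observation --- that for a Berwald metric the $hh$-curvature coefficients are those of the affine connection, so $R^{\wt g}(V,y)y=0$ forces the numerator of $K(y,\sigma)$ to vanish whatever the fundamental tensor $g_{(x,y)}$ is --- is exactly the missing justification. What the paper's version buys is brevity and independence from Berger's classification. The only (harmless) caveats in your write-up are the implicit assumptions $\dim M\geq 2$ (otherwise there are no flags and the statement is vacuous) and that the Berwald property and the nowhere-vanishing hypothesis lift to the universal cover, both of which are immediate since the covering projection is a local isometry.
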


Here is the structure of this paper. In Preliminaries we recall the basic notions in Finsler geometry. In Section \ref{sec: Berwald} we recall the fundamental tools for proving our main Theorem \ref{thm: main_thm}, namely Szabo's classification of Berwald spaces (Lemma \ref{lem: Sz}) and Deng-Hou result on Berwald locally symmetric space (Lemma \ref{lem: DH}). Here is where we prove on Theorem \ref{thm: main_thm}.

Remark \ref{rem: pseudo-maximal} contains a pseudo-Maximal diameter sphere theorem for Berwald space that results directly by combining the Theorem \ref{thm: main_thm} with the famous Toponogov Theorem \ref{thm: Toponogov}. The statement looks as an interesting result, but a closer look shows that the bounded curvature hypothesis force the Finsler metric to be Riemannian, and then we simply quoted the Riemannian result. This is one type of frequent errors that should be avoided.

In a future research we will consider the extension to Berwald space of the Toponogov-type Maximal diameter sphere theorem in the case when the radius flag curvature  of $(M,F)$ is bounded below by the radial curvature of a 2-sphere of revolution with known cut locus as in (\cite{B}).

\acknowledgement{We express our gratitude to Prof. M. Tanaka for this problem. We also thank to Prof. S. Deng and V. Matveev for sharing with us their research, as well as Prof. H. Shimada and D. Bao for many useful suggestions.
}

\section{Preliminaries}

\quad Let $M$ be an $n$-dimensional differentiable manifold with the local coordinate system $x=(x^1,\dots,x^n)$, and let $\pi:TM\to M$ be its tangent bundle with the natural projection $\pi$, and canonical coordinates $(x,y)$.  

Let us recall (\cite{BCS}) that  a {\it Riemann-Finsler manifold} $(M,F)$ is an $n$-dimensional differential manifold $M$ endowed with a norm 
$F:TM\to [0,\infty)$ such that
\begin{enumerate}
\item $F$ is positive and differentiable;
\item $F$ is 1-positive homogeneous, i.e. $F(x,\lambda y)=\lambda F(x,y)$, $\lambda>0$, $(x,y)\in TM$;
\item the Hessian matrix $g_{ij}(x,y):=\dfrac{1}{2}\dfrac{\partial^2 F^2}{\partial y^i\partial y^j}$ is positive definite on 
$\widetilde{TM}:=TM\setminus\{0\}$.
\end{enumerate}

The Riemann-Finsler structure is called {\it absolute homogeneous} if $F(x,-y)=F(x,y)$ because this leads to the homogeneity condition $F(x,\lambda y)=|\lambda| F(x,y)$, for any $\lambda\in \mathbb R$. 

By means of the Finsler fundamental function $F$ one defines the {\it indicatrix bundle} (or the Finslerian {\it unit sphere bundle}) by $SM:=\bigcup_{x\in M}S_xM$, where $S_xM:=\{y\in M\ : \ F(x,y)=1\}$.

In order to remove the redundancy resulting from the homogeneity, it is useful to consider all geometrical objects in Finsler geometry to be defined on the pull-back bundle $\pi^*TM$ over $SM$. Using the global section $l:=\frac{y^i}{F(x,y)}\frac{\partial}{\partial x^i}$ of $\pi^*TM$ one can construct a positively oriented $g$-orthonormal frame $\{e_1, e_2, \dots, e_n\}$ by putting $e_n:=l$, where $g=g_{ij}dx^i\otimes dx^j$ is the induced Riemannian metric on the fibres of $\pi^*TM$.  

The vector bundle $\pi^*TM$ has a torsion-free and almost $g$-compatible connection called {\it the Chern connection}
$D:C^\infty(TSM)\otimes C^\infty(\pi^*TM)\to C^\infty(\pi^*TM)$ given by 
\begin{equation*}
D_{\hat X}Z:=\{\hat X(z^i)+z^j\omega_j^{\ i}(\hat X)\}e_i,
\end{equation*}
where $\hat X$ is a vector field on $SM$, $Z:=z^ie_i$ is a section of $\pi^*TM$, and $\{\omega_j^{\ i}\}$ are the connection 1-forms on $SM$ (see \cite{BCS} or \cite{S} for details).

\begin{remark}
Locally, the connection 1-forms of the Chern connection are given by $\omega_j^{\ i}=\Gamma^i_{\ jk}(x,y)dx^k$, where 
\begin{equation*}
\Gamma^i_{\ jk}=\frac{1}{2}g^{is}\Bigl(\frac{\delta g_{js}}{\delta x^k}+  \frac{\delta g_{ks}}{\delta x^j}-
\frac{\delta g_{jk}}{\delta x^s} \Bigr)
\end{equation*}
are the local coefficients of the Chern connection, and $\Bigl(\frac{\delta }{\delta x^i}, F\frac{\partial }{\partial y^i}\Bigr)$ is a local adapted basis for $T(\widetilde{TM})$ determined by the non-linear connection of $(M,F)$ (see \cite{BCS} for details). 

\end{remark}

The curvature 2-forms of the Chern connection $\Omega_j^{\ i}:=d\omega_j^{\ i}-\omega_j^{\ k}\wedge\omega_k^{\ i}$ can be locally written as 
\begin{equation*}
\Omega_j^{\ i}=\frac{1}{2}R^{\ i}_{j\ kl}dx^k\wedge dx^l+P^{\ i}_{j\ kl}dx^k\wedge \frac{\delta y^l}{F}.
\end{equation*}

The concrete form of the coefficients $R^{\ i}_{j\ kl}$ and $P^{\ i}_{j\ kl}$ are not essential for our study and can be found in \cite{BCS}, \cite{S} or any other classical textbooks in Finsler geometry. 

The Finslerian analogue of the sectional curvature is the flag curvature defined as following (see \cite{BCS}). A {\it flag} on $M$ at a point $x\in M$ is given by $(y, \sigma)$, where the flag pole $y$ is a non-zero vector in $\sigma$, and 
the flag cloth $\sigma$ is a 2-plane in the tangent space $T_xM$. The {\it flag curvature} of the flag $(y,\sigma)$ is defined by
\begin{equation*}
K(y,\sigma):=\frac{V^i(y^jR_{jikl}y^l)V^k}{g(y,y)g(V,V)-[g(y,V)]^2},
\end{equation*}
where $V:=V^i\frac{\partial}{\partial v^i}$ is any non-zero vector in $\sigma$ such that $y$ and $V$ are linear independent.

 Let $\gamma:[a,b]\to M$ be a smooth piecewise curve on $M$ with the velocity $T(t):=\frac{d\gamma(t)}{dt}$, and let $W(t)$ be an arbitrary vector field along $\gamma$. The {\it (non-linear) covariant derivative} of $W$ along $\gamma$ is defined by
 \begin{equation*}
 D_TW:=\Bigl[ \frac{dW^i}{dt}+W^jT^k\Gamma^i_{\ jk}(\gamma(t),T(t))  \Bigr]\frac{\partial}{\partial x^i}|_{\gamma(t)},
 \end{equation*}
 where $\Gamma^i_{\ jk}$ are the coefficients of the Chern connection. The vector field $W$ is said to be {\it parallel} along $\gamma$ if $D_TW=0$. 
 
 This covariant derivative is used to define the {\it parallel translation} along $\gamma$ by 
 \begin{equation*}
 \mathcal P_\gamma:T_{\gamma(a)}M\to T_{\gamma(b)}M,\qquad \mathcal P_\gamma(v)=W(b),
 \end{equation*}
 where $W=W(t)$ is the parallel vector field along $\gamma$ with $W(a)=v$. Observe that for given initial conditions, this $W$ is unique. It is known that $\mathcal P_\gamma$ is a diffeomorphism that preserves the Finslerian norm. 
 
 Using the notion of parallel transport one can define the holonomy group $H_p$ of $(M,F)$ at a point $p$ by analogy with the Riemannian case (see for instance \cite{Ber}, \cite{CS}).  
 
 For any point $p\in  M$,  the holonomy group acts on the tangent space $T_pM$. This action may either be irreducible as a group representation, or reducible in the sense that there is a splitting of $T_pM$ into orthogonal subspaces $T_pM = T'_pM \oplus T''_pM$, each of which is invariant under the action of the holonomy group.
Depending on the holonomy group  $M$ is said to be irreducible or reducible, respectively.
 
 The theory of holonomy groups on Finsler manifolds is well developed (for details see for instance \cite{K}).
 
 We also recall that a Finsler space $(M,F)$ is called a {\it locally symmetric Finsler space} if for any $p\in M$, there exists a neighbourhood $U_p$ of $p$ such that the geodesic symmetry with respect to $p$ is a local isometry of $U_p$. 
 \begin{remark}
 Such a Finsler space must be an absolute homogeneous one.
 \end{remark}
 
  A related notion is a {\it globally symmetric Finsler space}, that is a Finsler space whose each point is the isolated fixed point of an involutive isometry (see \cite{DH} for properties of local and global symmetric Finsler spaces).
 
Riemannian manifolds are trivial examples of Finsler structures. The Levi-Civita connection coincides with the Chern connection and the sectional curvature to the flag curvature of the Finsler counterpart. 

The simplest non-trivial examples are the locally Minkowski spaces. We recall that a Finsler manifold $(M,F)$ is called a {\it locally Minkowski space} if there exist certain privileged local coordinates $x=(x^i)$ on $M$, which together with the canonical coordinates $(x,y)$ induced in $TM$ make the fundamental function $F$ dependent only on $y$ and not on $x$. It is easy to see that locally Minkowski spaces are characterized by the vanishing of the curvature coefficients $R$ and $P$ of the Chern connection, hence they must have zero flag curvature for all flags on $M$.

\section{Berwald spaces}\label{sec: Berwald}

\quad Another simple example of Finsler manifolds are the so-called Berwald spaces that play a central role in the present paper. A Finsler manifold $(M,F)$ is called a {\it Berwald space} if the Chern connection coefficients $\Gamma^i_{\ jk}$ are constant on each punctured tangent space $\widetilde{T_xM}$ at each point $x\in M$. There is a variety of conditions characterizing Berwald spaces (see \cite{BCS}) and remarkable properties of these spaces, we recall here only the most important ones. 

Let $(M,F)$ be a Berwald space and $\gamma:[a,b]\to M$ a smooth curve on $M$ with velocity vector $T(t)=\frac{d\gamma}{dt}$. Then the (non-linear) parallel translation $\mathcal P_\gamma:T_{\gamma(a)}M\to T_{\gamma(b)}M$ is a linear isomorphism (\cite{I}).
Remark that in the assumption that $(M,F)$ is Berwald is essential. Indeed, in general, for arbitrary Finsler metrics the parallel translation is not linear.

It is easy to see that  the forward completeness of a Berwald space implies the backward completeness and hence, similar with the Riemannian case there is no distinction between forward and backward completeness. It is therefore natural to call Berwald spaces simply {\it complete}. Indeed, a countable product of complete metric spaces is a complete metric space as well. 

However, the most intriguing property of Berwald spaces is the existence of a Riemannian metric $g_F$, called the {\it Binet-Legendre metric} on $M$ whose Levi-Civita connection coincides with the Chern connection of the Berwald metric $(M,F)$ (see \cite{MT}, \cite{Sz}). This implies that the geodesics of the Berwald space $(M,F)$ coincide with the geodesics of $(M,g_F)$ and same for the holonomy  groups at a point $p\in M$. 

Using this last remarkable property the Berwald spaces can be classified as in the following

\begin{lemma}{\rm (\cite{Sz})}\label{lem: Sz}
A connected Berwald manifold $(M,F)$ must be one of the following four types:
\begin{enumerate}
\item[(1)] $(M,F)$ is a Riemannian manifold;
\item[(2)] $(M,F)$ is a locally Minkowski space;
\item[(3)] $(M,F)$  is locally irreducible and it is a locally symmetric non-Riemannian Berwald space of rank $r\geq 2$;
\item[(4)] $(M,F)$ has locally reducible, and in this case $M$ can be locally decomposed in a Descartes product of Riemannian spaces, locally Minkowski spaces and locally symmetric  non-Riemannian Berwald space of rank $r\geq 2$.
\end{enumerate}
\end{lemma}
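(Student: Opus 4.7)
My plan is to exploit the associated Riemannian metric $g_F$ (the Szab\'o / Binet--Legendre averaged metric) whose Levi-Civita connection coincides with the Chern connection of $F$, and then to apply the de Rham decomposition theorem together with Berger's classification of irreducible Riemannian holonomies. Because parallel transport in a Berwald space is linear and preserves $F$, while also preserving $g_F$ (coincidence of connections), the common holonomy group $H_p$ acts on $T_pM$ by linear transformations which are simultaneously Euclidean isometries of $g_F|_p$ and Minkowski isometries of the norm $F_p$. This dual invariance will be the organizing principle of the whole argument.

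\textbf{Reducible case.} If the representation of $H_p$ on $T_pM$ is reducible, I would apply de Rham's theorem to split $(M,g_F)$ locally into a Riemannian product. Since the Chern connection coincides with the Levi-Civita connection, the invariant distributions provided by de Rham are parallel for the Chern connection as well, so the splitting is compatible with the Berwald structure and yields a local product decomposition of $(M,F)$. Iterating on each factor reduces the problem to the irreducible case, and after noting that each irreducible factor must fall into (1), (2) or (3), this delivers case (4).

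\textbf{Irreducible case.} Now assume $H_p$ acts irreducibly on $T_pM$. I would split according to whether the action is transitive on the unit $g_F$-sphere. If yes, then $F_p$ is $H_p$-invariant, continuous and $1$-homogeneous, so it must be constant on each $g_F$-sphere, forcing $F_p = c\sqrt{g_F|_p}$; since $p$ was arbitrary, $(M,F)$ is Riemannian, giving case (1). If no, then by the Berger--Simons theorem (an irreducible, non-symmetric Riemannian holonomy acts transitively on the unit sphere), $(M,g_F)$ must be locally symmetric. Should the Chern-connection curvature vanish identically, both curvature tensors $R$ and $P$ vanish and $(M,F)$ is locally Minkowski, giving case (2); otherwise, promoting the local symmetry of $g_F$ to a Finslerian one lands in case (3), a non-Riemannian locally symmetric Berwald space of rank $r\geq 2$.

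\textbf{Main obstacle.} The hard part will be the last step: lifting the local symmetry of $(M,g_F)$ to one of $(M,F)$. Concretely, one must show that the geodesic symmetry at $p$ for $g_F$, which is an isometry acting as $-\id$ on $T_pM$, also preserves the Minkowski norm $F_p$ (equivalently, that $F$ is absolutely homogeneous on the factor under consideration, consistent with the remark after the definition of a locally symmetric Finsler space). Justifying this rigorously requires realising $-\id$ inside the closure of the isotropy representation of the holonomy, by invoking the structure theory of irreducible Riemannian symmetric spaces with non-transitive holonomy; this constitutes the technical core of Szab\'o's original argument, and the statement about rank $r\geq 2$ follows from the same classification.
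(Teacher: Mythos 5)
The paper offers no proof of this lemma to compare against: it is quoted directly from Szab\'o \cite{Sz}. Your outline does reproduce the standard strategy from the literature --- pass to an associated Riemannian metric whose Levi-Civita connection equals the Chern connection, apply de Rham splitting when the holonomy is reducible, and in the irreducible case exploit the dichotomy between holonomy transitive on the $g_F$-unit sphere (an invariant $1$-homogeneous $F_p$ is then constant on that sphere, forcing case (1)) and non-transitive holonomy, where Berger--Simons yields local symmetry; rank $1$ is excluded because rank-one symmetric spaces have sphere-transitive holonomy. Those branches are sound. One minor slip: you place the locally Minkowski case inside the irreducible branch, but a flat connection has trivial restricted holonomy, which is not an irreducible representation once $\dim M\geq 2$; the flat case must be split off before the reducible/irreducible dichotomy.

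The genuine gap is the step you yourself flag as the technical core: realising $-\id$ inside the closure of the holonomy representation so as to promote the affine geodesic symmetry of $(M,g_F)$ to an isometry of $(M,F)$. This mechanism fails in general. For an irreducible symmetric space $G/K$ the restricted holonomy is the isotropy image of $K^0$, and $-\id$ need not belong to it: for $SL(3,\mathbb{R})/SO(3)$ (rank $2$), conjugation by $g\in SO(3)$ preserves the spectrum of a symmetric traceless matrix $X$, whereas $-X$ has the opposite spectrum, so no holonomy element acts as $-\id$ on the tangent space. Hence there exist holonomy-invariant but non-reversible Minkowski norms there, i.e.\ Berwald metrics modelled on a rank-$2$ symmetric space that are not absolutely homogeneous and therefore, by the remark in Section 2, not locally symmetric Finsler spaces. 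What is actually provable in case (3) by your route is that the Chern connection is the Levi-Civita connection of a locally symmetric Riemannian metric of rank $r\geq 2$; the stronger claim that $(M,F)$ itself is a locally symmetric Finsler space does not follow from the argument you sketch, so you must either weaken the conclusion of case (3) to this affine statement or supply an independent proof that $F$ is reversible on such a factor.
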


Using this classification we are interested in learning more about Berwald spaces of bounded curvature below, but we can consider here the much more general case of non-vanishing flag curvature as in our main Theorem  \ref{thm: main_thm}. 



\begin{proof}[Proof of the Theorem \ref{thm: main_thm}]

Let us assume that the flag curvature of the Berwald space $(M,F)$ is nowhere vanishing on $M$ for any flag $(y,\sigma)$. We will show that $(M,F)$ must be Riemannian. We can therefore eliminate cases (1) and (2) because (1) is trivial and (2) must have flag curvature zero that means it does not satisfy the hypothesis of our theorem. 

Let us consider the case (3) when then holonomy group is irreducible and $(M,F)$ is a locally symmetric non-Riemannian Berwald space of rank $r\geq 2$. 

The argument is eased by the following result.

\begin{lemma}\rm{(\cite{DH})}\label{lem: DH}
{\it
Let $(M,F)$ be a complete Berwald locally symmetric space. If the flag curvature of $(M,F)$ is nowhere zero, then $F$ is Riemannian.
}
\end{lemma}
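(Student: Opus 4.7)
The plan is to prove the lemma by contradiction: assume $(M,F)$ is a complete Berwald locally symmetric space that is not Riemannian, and produce a flag at some $p\in M$ at which the flag curvature vanishes, contradicting the hypothesis. The key tool is that, for a Berwald space, the Chern connection coincides with the Levi-Civita connection of the Binet-Legendre Riemannian metric $g_F$ (cf.\ \cite{MT}, \cite{Sz}), and the Chern $(1,3)$-curvature tensor $R$, which is independent of the fibre direction $y$ because the $\Gamma^i_{\ jk}$ are, equals the Riemann curvature tensor of $(M,g_F)$. From the flag curvature formula recalled in the Preliminaries, the numerator is controlled by the vector $R(V,y)y\in T_pM$; in particular $K_F(y,V)$ vanishes as soon as $R(V,y)y=0$, regardless of the fibre metric $g_y$. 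So the problem reduces to exhibiting a $2$-plane in some $T_pM$ on which this vector-valued curvature quantity is zero.

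With this reduction I invoke Szabo's classification (Lemma \ref{lem: Sz}). Since $(M,F)$ is locally symmetric and assumed non-Riemannian, it must fall in case (2), (3), or (4). Case (2) (locally Minkowski) is immediate: every flag curvature vanishes identically, contradicting the hypothesis. In case (3), $(M,F)$ is locally irreducible, locally symmetric, of rank $r\geq 2$, so at each $p\in M$ there is a flat totally geodesic submanifold whose tangent space $\mathfrak{a}\subset T_pM$ has dimension $\geq 2$; by the classical Lie-algebraic description of curvature on symmetric spaces, the abelian character of $\mathfrak{a}$ forces $R(V,y)y=0$ for any linearly independent $y,V\in \mathfrak{a}$, giving a flat flag at $p$. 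In case (4), $M$ locally splits as a Cartesian product of factors that are Riemannian, locally Minkowski, or rank-$\geq 2$ locally symmetric non-Riemannian. Since $(M,F)$ is non-Riemannian, at least one factor is locally Minkowski or rank-$\geq 2$ symmetric, and applying the argument of case (2) or case (3) inside that factor produces a flat flag; alternatively, whenever the decomposition has at least two factors, choosing $y$ tangent to one factor and $V$ tangent to another gives $R(V,y)y=0$ by the block-diagonal structure of the curvature tensor of a product. In every sub-case the nowhere-vanishing hypothesis is violated, yielding the desired contradiction.

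The main obstacle I foresee is making the identification ``Chern curvature of $F$ equals Riemann curvature of $g_F$'' precise enough that a flat $2$-plane in the Riemannian sense is genuinely a flat flag in the Finslerian sense. Since the argument only uses the vanishing of the \emph{vector} $R(V,y)y$, not any equality of bilinear forms involving $g_y$ versus $g_F$, the identification supplied by the Binet-Legendre construction of Matveev--Troyanov and Szabo is in fact enough. Once this bridge is in place, the proof becomes essentially a quotation of classical structural results for Riemannian locally symmetric spaces---existence of flats of dimension equal to the rank and de Rham-type decomposition for reducible factors---applied on top of Szabo's theorem for Berwald manifolds.
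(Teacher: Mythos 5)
Your proposal is correct in substance, but it takes a genuinely different route from the paper for the simple reason that the paper does not prove this lemma at all: Lemma \ref{lem: DH} is quoted verbatim from Deng--Hou \cite{DH} (with a pointer to \cite{Mat}) and used as a black box to dispose of case (3) of Szabo's classification inside the proof of Theorem \ref{thm: main_thm}. What you supply is a self-contained argument: reduce the flag curvature to the vanishing of the vector $y^jR^{\ m}_{j\ kl}y^lV^k$ (which is metric-independent, so the Binet--Legendre identification of the Chern $hh$-curvature with the Riemann curvature of $g_F$ suffices), then run Szabo's trichotomy. Your case (3) argument --- a rank $r\geq 2$ symmetric space has a maximal abelian subspace $\mathfrak{a}\subset T_pM$ of dimension $\geq 2$ on which $R(X,Y)Z=-[[X,Y],Z]=0$, hence a flat flag --- is exactly the kind of argument the cited references carry out, and your case (4) product argument reproduces what the paper itself does for case (4) of the Main Theorem. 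What your approach buys is independence from \cite{DH} (and from the unpublished preprint \cite{Mat}); what the paper's approach buys is brevity and a clean separation between the locally symmetric case and the reducible case.

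One small caution: in case (4) your first assertion, that non-Riemannianity of $(M,F)$ forces at least one factor to be locally Minkowski or of rank $\geq 2$, is not obviously true as stated --- a holonomy-invariant Minkowski norm on a reducible tangent space $T'_pM\oplus T''_pM$ need not be the Pythagorean combination of the induced norms on the factors, so a product of two ``Riemannian-type'' factors can still carry a non-Riemannian Berwald norm. This does not damage the proof, because your alternative argument (take $y$ tangent to one factor and $V$ to another, so that $R(V,y)y=0$ by the block structure of the product curvature) applies whenever the holonomy is reducible, which is the definition of case (4). You should lead with that argument and drop the first assertion, or justify it separately.
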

(Compare with  \cite{Mat}).

It follows immediately that the Berwald spaces of type (3) with nowhere vanishing flag curvature must be Riemannian and this case is also solved. 

The remaining case (4) is when the holonomy group is reducible and in this case the manifold $M$ can be written as the product of, say a Riemannian, a locally Minkowski and a locally symmetric (rank $r\geq 2$) space (the number of factors is not important). Recall that in this case the tangent space to $M$ is also obtained as the product of the tangent spaces to each factor. 

Since $(M,F)$ is a Berwald space, its Chern connection coincides to the Levi-Civita connection of a Riemannian space $(M,g_F)$ (the Binet-Legendre metric studied in \cite{MT}), as explained already, so we can think of the Chern connection in terms of a Riemannian one. 

It is a classical fact of Riemannian geometry that the Levi-Civita connection of the Riemannian product space $(M,g_F)$ must satisfy $\nabla _XY=0$, for any tangent vectors $X$ and $Y$ belonging to the tangent spaces of different factors in the decomposition of $M$. Since this Levi-Civita connection coincides with the Chern connection of the Berwald space $(M,F)$, same fact can be said about the Chern connection. 

Indeed, if $(M_1,g_1)$, $(M_2,g_2)$ are Riemannian manifolds with Levi-Civita connection $\nabla^1$ and $\nabla^2$, respectively, then the Levi-Civita connection of the product manifold $M:=M_1\times M_2$ with Riemannian metric $g:=g_1+g_2$ is given by
\begin{equation*}
\nabla_{Y_1+Y_2}(X_1+X_2)=\nabla^1_{Y_1}X_1+\nabla^2_{Y_2}X_2,
\end{equation*}
where $X_1,Y_1\in \chi(M_1)$, $X_2,Y_2\in \chi(M_2)$, that is $\nabla$ has the properties
\begin{equation*}
\nabla_{Y_1}X_1=\nabla_{Y_1}^1X_1,\quad \nabla_{Y_2}X_2=\nabla_{Y_2}^2X_2
\end{equation*} 
and $\nabla_{Y_i}X_j=\nabla_{X_j}Y_i=0$, for any $i\neq j$. Note that here we canonically identify vector fields on the factors $M_1$ and $M_2$ with the vector fields on the product manifold $M_1\times M_2$ by $X_i\equiv (X_i,0)$, $Y_i\equiv (0,Y_i)$ for $i\in\{1,2\}$.

If we consider the curvature operator
\begin{equation*}
R(X,Y)Z=\nabla_X\nabla_YZ-\nabla_Y\nabla_XZ-\nabla_{[X,Y]}Z
\end{equation*}
on $M=M_1\times M_2$ it is easy to see that $R(X,Y)Z=0$, when $X\equiv (X_1,0)$, $Y\equiv(0,Y_2)$ come from different factors.

Moreover, for any section $\sigma=\text{span}\{X,Y\}\subset T_{(p,q)}(M_1\times M_2)$ such that $X\in T_pM_1$, $Y\in T_qM_2$, one can easily see that the sectional curvature of this $\sigma$ with respect to the product Riemannian structure vanishes, i.e. $K_{p,q}(\sigma)=0$. (see for instance \cite{DoCarmo}, Chapter 6, Exercise 1 or \cite{Peter}, Chapter 2, Exercise 22).

The geometrical intuition underlying this phenomena become clear if we think to the product of two spheres $M:=\Sph_a^n\times \Sph_b^m$ of dimensions $n$ and $m$ and radii $a$ and $b$, respectively. Straightforward computations show that all sectional curvatures lie in the interval $[0,\max\{a,b\}]$ and hence $\Sph_a^n\times \Sph_b^m$ has always constant scalar curvature, but can never have constant sectional curvature (see \cite{Peter}, Section 3.2)

This means that there must exist a tangent plane $\sigma$ to $M$ whose flag curvature, with respect to any flag pole $y$ is zero, that is $K(y,\sigma)$ cannot be non-vanishing for any point of $M$ and any flag. This proves our theorem. 
$\qedd$
\end{proof}

\begin{remark}\label{rem: pseudo-maximal}
A classical result in Riemannian geometry due to Bonnet says that if the sectional curvature $K$ of a complete Riemannian manifold $(M,g)$ satisfies $K\geq H$, where $H$ is a positive constant, then $diam (M,g)\leq \frac{\pi}{\sqrt{H}}$ (see for instance \cite{Ber}). 

It is natural to ask what happens when the diameter of the Riemannian manifold is maximal, that is $diam (M,g)=\frac{\pi}{\sqrt{H}}$, and this led Toponogov to his famous Maximal diameter sphere theorem.

\begin{theorem}{\rm{(\cite{T})}}\label{thm: Toponogov}
Let $(M,g)$ be a complete connected $n$-dimensional Riemannian manifold such that 
\begin{enumerate}
\item[(1)] the sectional curvature is bounded from bellow by a positive constant $H$;
\item[(2)] $diam (M,g)=\frac{\pi}{\sqrt{H}}$.
\end{enumerate}
Then $(M,g)$ is isometric to the $n$-dimensional sphere of radius $\frac{\pi}{\sqrt{H}}$.
\end{theorem}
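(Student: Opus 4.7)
The plan is to combine the Toponogov triangle comparison theorem, applied in $(M,g)$ against the model sphere $S^n_H$ of constant curvature $H$, with its rigidity case. By Bonnet-Myers the manifold is compact with $\diam(M,g) \leq \pi/\sqrt{H}$, and hypothesis (2) makes this bound sharp, so I fix two points $p,q \in M$ realizing $d(p,q) = \pi/\sqrt{H}$.

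The first step is to show that every $x \in M$ lies on some minimizing geodesic from $p$ to $q$, i.e.\ $d(p,x) + d(x,q) = \pi/\sqrt{H}$. I would consider the hinge at $p$ formed by minimizing geodesics $p \to x$ and $p \to q$ and transplant it into $S^n_H$ preserving the two side lengths and the angle at $p$. Toponogov comparison gives $d(x,q) \leq \tilde d(\tilde x, \tilde q)$, and since the diameter of $S^n_H$ is also $\pi/\sqrt{H}$, the triangle inequality in $S^n_H$ yields $\tilde d(\tilde x, \tilde q) \leq \pi/\sqrt{H} - d(p,x)$. Combined with the reverse inequality $d(p,x) + d(x,q) \geq d(p,q) = \pi/\sqrt{H}$, this forces equality throughout.

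Next I would deduce that $q$ is the unique cut point of $p$: Rauch comparison forbids conjugate points along any unit-speed geodesic from $p$ at parameter strictly less than $\pi/\sqrt{H}$, and the previous step implies each such geodesic reaches $q$ exactly at parameter $\pi/\sqrt{H}$. Hence $\exp_p$ restricts to a diffeomorphism of the open ball of radius $\pi/\sqrt{H}$ onto $M \setminus \{q\}$, presenting $M$ as a topological sphere. The rigidity case of the Hessian (equivalently, Jacobi field) comparison for $r = d(p,\cdot)$ now enters: since $r$ attains the extremal value $\pi/\sqrt{H}$ along every radial geodesic simultaneously, every comparison inequality for radial Jacobi fields is saturated, forcing $K \equiv H$ on every $2$-plane containing a radial direction from $p$. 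Repeating the same argument from the symmetric basepoint $q$ covers the remaining tangent $2$-planes, so $K \equiv H$ throughout $M$. Being complete and simply connected of constant curvature $H$, $(M,g)$ is then isometric to the sphere asserted in the statement.

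The main obstacle is the rigidity step: extracting constant sectional curvature from equality in a one-parameter family of Jacobi field comparisons. One must verify carefully that every radial Jacobi field along every radial geodesic coincides exactly with its model counterpart, and then assemble this pointwise directional information into the global statement $K \equiv H$, making essential use of both basepoints $p$ and $q$ (together with the topological-sphere structure and the symmetry of the construction) in order to reach every tangent $2$-plane at every point of $M$.
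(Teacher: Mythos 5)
The paper does not actually prove this statement: it is Toponogov's classical maximal diameter theorem, quoted verbatim from \cite{T} (see also \cite{CE}), so your attempt can only be measured against the standard proof in the literature, not against an argument in the paper.

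Your outline follows that standard proof correctly up to and including the radial rigidity: the hinge comparison giving $d(p,x)+d(x,q)=\pi/\sqrt{H}$ for every $x$; the deduction that every unit-speed geodesic from $p$ minimizes up to time $\pi/\sqrt{H}$ and arrives at $q$ there (this needs the small extra observation that concatenating $\gamma|_{[0,t]}$ with a minimal segment from $\gamma(t)$ to $q$ produces a curve of length $d(p,q)$, hence a smooth minimal geodesic); and the index-form rigidity forcing $K(\sigma)=H$ for every $2$-plane $\sigma$ containing a radial direction from $p$. The genuine gap is the sentence claiming that ``repeating the same argument from the symmetric basepoint $q$ covers the remaining tangent $2$-planes.'' It does not. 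At any $x\in M\setminus\{p,q\}$ the unique minimal geodesic from $p$ through $x$ continues on to $q$, so the radial direction from $q$ at $x$ is simply the negative of the radial direction from $p$ at $x$; a $2$-plane contains a vector if and only if it contains its negative, so the two families of planes coincide exactly, and the planes transverse to the common radial line are never reached by either basepoint. As written, your argument establishes $K\equiv H$ only on radial planes and the passage to $K\equiv H$ on all of $M$ fails.

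The standard way to close the argument is to \emph{not} prove $K\equiv H$ first. In geodesic polar coordinates about $p$ the metric on $M\setminus\{p,q\}$ is completely determined by the Jacobi fields along radial geodesics vanishing at $p$, and the rigidity you already established says $R(E,\gamma')\gamma'=HE$ for every parallel normal field $E$ along every radial geodesic, whence each such Jacobi field equals $\frac{1}{\sqrt{H}}\sin(\sqrt{H}\,t)E(t)$ --- exactly its counterpart on the model sphere of constant curvature $H$. Consequently $\exp_p\circ\,\iota\circ\exp_{\tilde p}^{-1}$, for any linear isometry $\iota$ from the tangent space of the model sphere at a point $\tilde p$ to $T_pM$, is a local isometry of the punctured model sphere onto $M\setminus\{q\}$, and it extends by continuity to a global isometry. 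Constant sectional curvature of $(M,g)$ is then a consequence of this isometry rather than an intermediate step toward it.
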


In the light of the previous section, one could formulate a pseudo-Maximal diameter sphere theorem for Berwald spaces.

Let $(M,F)$ be a complete connected Berwald space such that 
\begin{enumerate}
\item[(1)] the flag curvature is bounded below by a positive number, that is $K(y,\sigma)\geq H$ for any flag $(y,\sigma)$ at $x\in M$, where $H$ is a positive constant;
\item[(2)] $diam (M,F)=\frac{\pi}{\sqrt{H}}$. 

Then $(M,F)$ is isometric to the $n$-dimensional sphere of radius $\frac{\pi}{\sqrt{H}}$ with canonical Riemannian metric. 
\end{enumerate}

Indeed, let us assume the same conditions as Toponogov for a complete connected Berwald space $(M,F)$. Then flag curvature bounded condition combined with our Theorem \ref{thm: main_thm} shows that actually $(M,F)$ must be Riemannian, and hence the conclusion follows from Toponogov's Theorem  \ref{thm: Toponogov}.

However,  one should be aware of the fact that the hypothesis here implies that the Finsler metric is a Riemannian metric one and then we simply write down the Riemannian result. In order to around such traps are should always look for explicit non-trivial examples of Finsler metrics with the desired properties.

\end{remark}



\bigskip



\noindent
Nathaphon BOONNAM\\
Department of Applied Mathematics and Informatics\\
Faculty of Science and Industrial Technology, Prince of Songkla University\\
Muang Suratthani, Suratthani 84000 Thailand

{\tt nathaphon.b@psu.ac.th
}

\medskip

\noindent
 Rattanasak HAMA\\
Department of Mathematics,\\
Faculty of Science, KMITL, Bangkok, 10520, Thailand

\medskip
{\tt rattanasakhama@gmail.com}

\medskip

\noindent 
Sorin V. SABAU\\
School of Biological Sciences,
Department of Biology,\\
Tokai University,
Sapporo 005\,--\,8600,
Japan

\medskip
{\tt sorin@tokai.ac.jp}

\end{document}